\documentclass[12pt,eqno,]{article}
\usepackage{amsmath,amsthm,amssymb}
\usepackage[a4paper]{geometry}
\geometry{vmargin=2.5cm,hmargin=3.5cm}
\theoremstyle{plain}
\newtheorem{theorem}{THEOREM}[section]

\newtheorem{lemma}[theorem]{LEMMA}

\numberwithin{theorem}{section}
\numberwithin{equation}{section}
\numberwithin{table}{section}
\numberwithin{figure}{section}

\theoremstyle{definition}

\usepackage{enumerate}
\usepackage{fancyhdr}
\pagestyle{fancy}
\fancyhf{} % delete current setting for header and footer
\fancyhead[LE,RO]{\bfseries-~\thepage~-}
\fancyhead[LO]{\rm\rightmark}
\fancyhead[RE]{\rm\leftmark}
\fancyhead[R]{\thepage}\fancyhead[L]{\it \footnotesize Some best possible inequalities concerning certain bivariate means }
\makeatletter

\newcommand{\Rmnum}[1]{\expandafter\@slowromancap\romannumeral #1@}
\usepackage[center]{titlesec}
\makeatother

\title{\normalsize \bf SOME BEST POSSIBLE INEQUALITIES CONCERNING CERTAIN BIVARIATE MEANS}
\author{\small  \textsc{Tiehong Zhao, Yuming Chu and Baoyu Liu}}
\date{}

\begin{document}
\maketitle

\renewcommand{\thefootnote}{\fnsymbol{footnote}}

\footnotetext{\hspace*{-5mm}
\begin{tabular}{@{}r@{}p{14.0cm}@{}}
&\qquad {\it Mathematics Subject Classification (2010)}: 26E60.\\
&\qquad {\it Keywords and phrases}: Harmonic mean, geometric mean, Neuman-S\'{a}ndor mean, quadratic mean, contra-harmonic mean.\\
&\qquad This research was supported by the Natural Science
Foundation of China under Grants 11071069 and 11171307, and Innovation
Team Foundation of the Department of Education of Zhejiang Province
under Grant T200924.
\end{tabular}}

\vspace{-1cm}

\def\abstractname{}
\begin{abstract}
\noindent {\it Abstract.} In this paper, some inequalities of bounds for the Neuman-S\'{a}ndor mean in terms of weighted arithmetic means of two bivariate means are established. Bounds involving weighted arithmetic means are sharp.
\end{abstract}

\section{Introduction}

\medskip

For $a,b>0$ with $a\neq b$ the Neuman-S\'{a}ndor mean $M(a,b)$ [1] is defined by
\begin{equation*}
M(a,b)=\frac{a-b}{2{{\sinh^{-1}}}\left[(a-b)/(a+b)\right]},
\end{equation*}
where ${\sinh^{-1}}(x)=\log(x+\sqrt{1+x^2})$ is the inverse
hyperbolic sine function.

\medskip

Recently, the Neuman-S\'{a}ndor mean has been the subject of
intensive research. In particular, many remarkable inequalities for
the Neuman-S\'{a}ndor mean $M(a,b)$ can be found in the literature
[1-4].

\medskip

Let $H(a,b)=2ab/(a+b)$, $G(a,b)=\sqrt{ab}$, $L(a,b)=(b-a)/(\log
b-\log a)$, $P(a,b)=(a-b)/(4\arctan\sqrt{a/b}-\pi)$,
$A(a,b)=(a+b)/2$, $T(a,b)=(a-b)/[2\arcsin((a-b)/(a+b))]$,
$Q(a,b)=\sqrt{(a^2+b^2)/2}$ and $C(a,b)=(a^2+b^2)/(a+b)$ be the
harmonic, geometric, logarithmic, first Seiffert, arithmetic, second
Seiffert, quadratic and contra-harmonic means of $a$ and $b$,
respectively. Then it is well-known that the inequalities
\begin{align*}
H(a,b)<G(a,b)<L(a,b)<P(a,b)<A(a,b)\\ <M(a,b)<T(a,b)<Q(a,b)<C(a,b)
\end{align*}
hold for all $a,b>0$ with $a\neq b$.

\medskip

In [1, 2], Neuman and S\'{a}ndor proved that the double inequalities
\begin{equation*}
A(a,b)<M(a,b)<T(a,b),
\end{equation*}
\begin{equation*}
P(a,b)M(a,b)<A^{2}(a,b),
\end{equation*}
\begin{equation*}
A(a,b)T(a,b)<M^{2}(a,b)<(A^{2}(a,b)+T^{2}(a,b))/2
\end{equation*}
hold for all $a,b>0$ with $a\neq b$.

\medskip

Let $0<a,b<1/2$ with $a\neq b$, $a'=1-a$ and $b'=1-b$. Then the
following Ky Fan inequalities
\begin{equation*}
\frac{G(a,b)}{G(a',b')}<\frac{L(a,b)}{L(a',b')}<\frac{P(a,b)}{P(a',b')}<\frac{A(a,b)}{A(a',b')}<\frac{M(a,b)}{M(a',b')}<\frac{T(a,b)}{T(a',b')}
\end{equation*}
were presented in [1].

\medskip

The double inequality $L_{p_{0}}(a,b)<M(a,b)<L_{2}(a,b)$ for all
$a,b>0$ with $a\neq b$ was established by Li et al. in [3], where
$L_{p}(a,b)=[(b^{p+1}-a^{p+1})/((p+1)(b-a))]^{1/p}(p\neq -1, 0)$,
$L_{0}(a,b)=1/e(b^{b}/a^{a})^{1/(b-a)}$ and
$L_{-1}(a,b)=(b-a)/(\log{b}-\log{a})$ is the $p$-th generalized
logarithmic mean of $a$ and $b$, and $p_{0}=1.843\cdots$ is the
unique solution of the equation $(p+1)^{1/p}=2\log(1+\sqrt{2})$.

\medskip

Neuman [4] proved that the double inequalities
$$\alpha Q(a,b)+(1-\alpha)A(a,b)<M(a,b)<\beta Q(a,b)+(1-\beta)A(a,b)$$
and
$$\lambda Q(a,b)+(1-\lambda)A(a,b)<M(a,b)<\mu Q(a,b)+(1-\mu)A(a,b)$$
hold for all $a,b>0$ with $a\neq b$ if and only if $\alpha\leq(1-\log(\sqrt{2}+1))/[(\sqrt{2}-1)\log(\sqrt{2}+1)]=0.3249\cdots$, $\beta\geq1/3$,
$\lambda\leq(1-\log(\sqrt{2}+1))/\log(\sqrt{2}+1)=0.1345\cdots$ and $\mu\geq1/6$.

\medskip

The main purpose of this paper is to find the least values $\alpha_1,\alpha_2,\alpha_3,$ and the greatest values $\beta_1,\beta_2,\beta_3,$ such that the double inequalities
\begin{align*}
\alpha_1H(a,b)+(1-\alpha_1)Q(a,b)&<M(a,b)<\beta_1H(a,b)+(1-\beta_1)Q(a,b),\\
\alpha_2G(a,b)+(1-\alpha_2)Q(a,b)&<M(a,b)<\beta_2G(a,b)+(1-\beta_2)Q(a,b),\\
\alpha_3H(a,b)+(1-\alpha_3)C(a,b)&<M(a,b)<\beta_3H(a,b)+(1-\beta_3)C(a,b)
\end{align*}
hold true for all $a,b>0$ with $a\neq b$.

\medskip

Our main results are presented in Theorems 1.1-1.3.

\begin{theorem}
The double inequality
\begin{equation}
\alpha_1H(a,b)+(1-\alpha_1)Q(a,b)<M(a,b)<\beta_1H(a,b)+(1-\beta_1)Q(a,b)
\end{equation}
holds for all $a,b>0$ with $a\neq b$ if and only if $\alpha_1\geq 2/9=0.2222\cdots$ and $\beta_1\leq1-1/[\sqrt{2}\log(1+\sqrt{2})]=0.1977\cdots$.
\end{theorem}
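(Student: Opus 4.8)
The plan is to exploit the homogeneity and symmetry of all the means involved in order to reduce (1.1) to a one-variable inequality, and then to read off the sharp constants from the monotonicity of a single auxiliary function. Assuming without loss of generality that $a>b>0$, I would set $t=(a-b)/(a+b)\in(0,1)$ and $A=A(a,b)=(a+b)/2$. A direct computation gives $M(a,b)=A\,t/\sinh^{-1}t$, $H(a,b)=A(1-t^2)$ and $Q(a,b)=A\sqrt{1+t^2}$. Since $H<Q$ on $(0,1)$, so that $H-Q<0$, the left-hand inequality in (1.1) is equivalent to $\alpha_1>f(t)$ and the right-hand inequality to $\beta_1<f(t)$, where
\[
f(t)=\frac{M-Q}{H-Q}=\frac{t/\sinh^{-1}t-\sqrt{1+t^2}}{(1-t^2)-\sqrt{1+t^2}}.
\]
Thus the whole theorem reduces to computing $\sup_{t\in(0,1)}f$ and $\inf_{t\in(0,1)}f$: the least admissible $\alpha_1$ is $\sup f$ and the greatest admissible $\beta_1$ is $\inf f$.

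Next I would compute the two endpoint values. Expanding $\sinh^{-1}t=t-t^3/6+O(t^5)$ and $\sqrt{1+t^2}=1+t^2/2+O(t^4)$ shows that both the numerator and the denominator of $f$ behave like constant multiples of $t^2$ as $t\to0^+$, giving $\lim_{t\to0^+}f(t)=2/9$. At the other end, $t\to1^-$ yields $\lim_{t\to1^-}f(t)=\bigl(1/\log(1+\sqrt2)-\sqrt2\bigr)/(-\sqrt2)=1-1/[\sqrt2\log(1+\sqrt2)]$. These are exactly the two constants in the statement, so it remains to show that $f$ runs monotonically between them.

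The heart of the argument is to prove that $f$ is strictly decreasing on $(0,1)$; this immediately gives $\sup f=2/9$ and $\inf f=1-1/[\sqrt2\log(1+\sqrt2)]$, both unattained, which is precisely why the inequalities are strict at the extremal constants and fail beyond them. Here I would substitute $u=\sinh^{-1}t\in(0,\log(1+\sqrt2))$, so that $\sqrt{1+t^2}=\cosh u$ and $1-t^2=2-\cosh^2u$, turning $f$ into
\[
f=\frac{u\cosh u-\sinh u}{u(\cosh u-1)(\cosh u+2)}=:\frac{F_1(u)}{F_2(u)},
\]
where $F_1(0)=F_2(0)=0$. By the monotone form of L'Hôpital's rule it then suffices to show that $F_1'/F_2'$ is decreasing; one checks the convenient simplification $F_1'(u)=u\sinh u$, while $F_2'(u)=\cosh^2u+\cosh u-2+u\sinh u(2\cosh u+1)$.

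The main obstacle is establishing the monotonicity of $F_1'/F_2'$. Writing $\cosh u-1=2\sinh^2(u/2)$ and $\sinh u=2\sinh(u/2)\cosh(u/2)$ reduces the problem to the sign of an explicit hyperbolic expression, which I would settle either by a further application of the monotone L'Hôpital rule or by comparing the power-series coefficients of the numerator and denominator of $F_1'/F_2'$ (both even series in $u$) and showing that the resulting combination keeps a fixed sign on the relevant interval. Once $f$ is known to be strictly decreasing, the characterization of $\alpha_1$ and $\beta_1$ follows from the equivalence established in the first step, completing the proof.
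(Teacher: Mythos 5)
Your overall route coincides with the paper's own proof of Theorem 1.1: reduce by symmetry and homogeneity to the ratio $(M-Q)/(H-Q)$ as a function of $t=(a-b)/(a+b)$, pass to the variable $u=\sinh^{-1}t\in\bigl(0,\log(1+\sqrt2)\bigr)$, compute the two endpoint limits $2/9$ and $1-1/[\sqrt2\log(1+\sqrt2)]$, and deduce the sharp constants from strict monotonicity. The algebra and the limits are all correct; in particular $\tfrac12\cosh 2u+\cosh u-\tfrac32=(\cosh u-1)(\cosh u+2)$, so your $F_2$ is exactly the denominator the paper works with, and your equivalence ``left inequality $\Leftrightarrow \alpha_1>f(t)$'' is the right way to see that the admissible constants are $\sup f$ and $\inf f$.

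The one genuine gap is that the decisive step --- the strict monotonicity of $f$ on $(0,1)$ --- is never actually proved: you reduce it to the monotonicity of $F_1'/F_2'$ and then say you would settle that ``either by a further application of the monotone L'H\^opital rule or by comparing power-series coefficients,'' without carrying out either. As written, the proof is only a plan at its crucial point. The gap is fillable, and the L'H\^opital--monotone detour is in fact unnecessary: writing $F_1(u)=u\cosh u-\sinh u=\sum_{n\ge1}\frac{2n}{(2n+1)(2n)!}\,u^{2n+1}$ and $F_2(u)=u(\cosh u-1)(\cosh u+2)=\sum_{n\ge1}\frac{2^{2n-1}+1}{(2n)!}\,u^{2n+1}$, the coefficient ratio is $\frac{2n}{(2n+1)(2^{2n-1}+1)}$, which is strictly decreasing in $n$ (the difference of consecutive terms has numerator $2+(2-18n-12n^2)2^{2n-1}<0$ for $n\ge1$); the ratio-of-power-series lemma (the paper's Lemma 2.1, from [5]) then gives directly that $f$ is strictly decreasing. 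Note also that differentiating these odd power series merely multiplies both coefficient sequences by the same factor $2n+1$, so the coefficient ratio of $F_1'/F_2'$ is the identical sequence: your two proposed methods collapse into one and the same computation, which simply has to be done to complete the argument.
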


\begin{theorem}
The double inequality
\begin{equation}
\alpha_2G(a,b)+(1-\alpha_2)Q(a,b)<M(a,b)<\beta_2G(a,b)+(1-\beta_2)Q(a,b)
\end{equation}
holds for all $a,b>0$ with $a\neq b$ if and only if $\alpha_2\geq1/3=0.3333\cdots$ and $\beta_2\leq1-1/[\sqrt{2}\log(1+\sqrt{2})]=0.1977\cdots$.
\end{theorem}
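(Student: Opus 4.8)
The plan is to reduce the two-sided estimate to a single one-variable monotonicity statement by exploiting that the means involved are all symmetric and homogeneous of degree one. Assuming without loss of generality that $a>b>0$ and writing $v=(a-b)/(a+b)\in(0,1)$, one has $A(a,b)=(a+b)/2$ together with
\[
G(a,b)=A(a,b)\sqrt{1-v^2},\qquad Q(a,b)=A(a,b)\sqrt{1+v^2},\qquad M(a,b)=\frac{A(a,b)\,v}{\sinh^{-1}(v)}.
\]
Dividing (1.2) by $A(a,b)$ therefore turns both inequalities into statements about the single function
\[
f(v)=\frac{\sqrt{1+v^2}-\dfrac{v}{\sinh^{-1}(v)}}{\sqrt{1+v^2}-\sqrt{1-v^2}},\qquad v\in(0,1),
\]
which is exactly $(Q-M)/(Q-G)$ in the normalized variables. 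Since $G<Q$, a direct rearrangement shows that $\alpha_2G(a,b)+(1-\alpha_2)Q(a,b)<M(a,b)$ is equivalent to $\alpha_2>f(v)$, while $M(a,b)<\beta_2G(a,b)+(1-\beta_2)Q(a,b)$ is equivalent to $\beta_2<f(v)$. Hence the lower bound holds for all admissible $a,b$ if and only if $\alpha_2\ge\sup_{v\in(0,1)}f(v)$, and the upper bound holds if and only if $\beta_2\le\inf_{v\in(0,1)}f(v)$.

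Next I would pin down the two candidate extremal values as boundary limits of $f$. Expanding $\sinh^{-1}(v)=v-v^3/6+\cdots$ gives $v/\sinh^{-1}(v)=1+v^2/6+\cdots$, and combining this with $\sqrt{1\pm v^2}=1\pm v^2/2+\cdots$ yields a numerator $\sim v^2/3$ and a denominator $\sim v^2$, so $f(0^+)=1/3$. At the other end, $G(a,b)\to0$, $Q(a,b)\to\sqrt2$ and $v/\sinh^{-1}(v)\to1/\log(1+\sqrt2)$, which gives $f(1^-)=1-1/[\sqrt2\log(1+\sqrt2)]$. These are precisely the constants $\alpha_2$ and $\beta_2$ claimed.

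The heart of the argument, and the step I expect to be the main obstacle, is to prove that $f$ is strictly decreasing on $(0,1)$; granting this, $\sup f=f(0^+)=1/3$ and $\inf f=f(1^-)=1-1/[\sqrt2\log(1+\sqrt2)]$, and the equivalences above then deliver both the sufficiency and the sharpness simultaneously. To establish the monotonicity I would write $f=N/D$ with $N(v)=\sqrt{1+v^2}-v/\sinh^{-1}(v)$ and $D(v)=\sqrt{1+v^2}-\sqrt{1-v^2}$, note that $N(0^+)=D(0^+)=0$, and apply the monotone form of L'Hôpital's rule: it suffices to show that $N'/D'$ is decreasing. Here
\[
D'(v)=v\Big[(1+v^2)^{-1/2}+(1-v^2)^{-1/2}\Big],\qquad N'(v)=\frac{v}{\sqrt{1+v^2}}-\frac{\sinh^{-1}(v)-v(1+v^2)^{-1/2}}{[\sinh^{-1}(v)]^{2}},
\]
so the delicate point is the factor $[\sinh^{-1}(v)]^{-2}$ appearing in $N'$, which makes $N'/D'$ transcendental and not obviously monotone.

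To overcome this I would substitute $t=\sinh^{-1}(v)$ (so $v=\sinh t$, $t\in(0,\log(1+\sqrt2))$) to clear the inverse hyperbolic sine, multiply through by the positive quantities $[\sinh^{-1}(v)]^{2}$ and the relevant square roots, and reduce the claim $(N'/D')'<0$ to the negativity of an explicit analytic auxiliary function, whose sign I expect to confirm by a power-series expansion with coefficients of a definite sign on the relevant range. Should a single application of the monotone L'Hôpital rule fail to yield a manifestly monotone $N'/D'$, I would iterate the rule once more, or equivalently analyze $f'$ directly through the sign of its numerator after the same substitution. Once monotonicity is in hand the proof closes exactly as described above, yielding both the admissible ranges for $\alpha_2,\beta_2$ and their optimality.
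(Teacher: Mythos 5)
Your reduction is sound and your endpoint computations are correct: after normalizing by $A(a,b)$, the two inequalities are equivalent to $f(v)<\alpha_2$ and $f(v)>\beta_2$ with $f=(Q-M)/(Q-G)$, and the limits $f(0^+)=1/3$ and $f(1^-)=1-1/[\sqrt2\log(1+\sqrt2)]$ give the necessity of the stated constants exactly as the paper does it. The gap is in the sufficiency: everything rests on the claim that $f$ is strictly decreasing on $(0,1)$, which you have only sketched a plan for proving while candidly flagging it as the main obstacle. That plan is unlikely to go through as written. After the substitution $t=\sinh^{-1}(v)$ the derivative of the denominator contains $\sinh t/\sqrt{1-\sinh^2 t}$, and $\sqrt{1-\sinh^2 t}$ is the square root of a power series, not a hyperbolic function of $t$; it admits no expansion with coefficients of a definite sign that one could feed into a quotient-of-series monotonicity criterion. (This is precisely why the analogous trick works for Theorem 1.1, where $H/A=1-x^2=(3-\cosh 2t)/2$ involves no square root, but is unavailable here.) So ``confirm by a power-series expansion with coefficients of a definite sign'' is a hope rather than an argument, and iterating the monotone L'H\^opital rule does not remove the offending square root.

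The paper does not prove monotonicity of $f$ at all. It uses the endpoint limits only for necessity, and for sufficiency it proves the weaker but adequate statement (Lemma 2.2) that the auxiliary function $f_p(x)=\sinh^{-1}(x)-x/\bigl[\sqrt{1+x^2}-p(\sqrt{1+x^2}-\sqrt{1-x^2})\bigr]$ satisfies $f_{1/3}<0$ and $f_{\lambda_0}>0$ on $(0,1)$; this is carried out by an explicit sign analysis of $f_p'$ through two further derivatives, piecewise monotonicity arguments, and numerical estimates on subintervals such as $(0,0.9]$ and $(0.9,1)$. To complete your proof you would need either to carry out a comparably explicit sign analysis establishing the monotonicity of $f$ (which appears to be true, but is strictly stronger than what the theorem requires), or to retreat to the paper's strategy of verifying the two extreme cases $p=1/3$ and $p=\lambda_0$ separately.
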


\begin{theorem}
The double inequality
\begin{equation}
\alpha_3H(a,b)+(1-\alpha_3)C(a,b)<M(a,b)<\beta_3H(a,b)+(1-\beta_3)C(a,b)
\end{equation}
holds for all $a,b>0$ with $a\neq b$ if and only if $\alpha_3\geq1-1/[2\log(1+\sqrt{2})]=0.4327\cdots$ and $\beta_3\leq5/12=0.4166\cdots$.
\end{theorem}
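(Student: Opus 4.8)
The plan is to normalize by homogeneity and reduce each two-sided estimate to a single-variable inequality. Assuming without loss of generality that $a>b>0$ and setting $t=(a-b)/(a+b)\in(0,1)$, a direct computation gives $H/A=1-t^2$, $C/A=1+t^2$ and $M/A=t/\sinh^{-1}t$, where $A=A(a,b)$. Hence $\alpha H+(1-\alpha)C=A[1+(1-2\alpha)t^2]$, so after dividing by $A$ the left-hand inequality in (1.3) becomes $1+(1-2\alpha)t^2<t/\sinh^{-1}t$ and the right-hand one becomes $t/\sinh^{-1}t<1+(1-2\beta)t^2$. Isolating the parameter in each case shows that both inequalities are governed by the single function
\begin{equation*}
f(t)=\frac{(1+t^2)\sinh^{-1}t-t}{2t^2\sinh^{-1}t},
\end{equation*}
namely the left inequality holds for all $t\in(0,1)$ if and only if $\alpha\ge\sup_{(0,1)}f$, and the right one if and only if $\beta\le\inf_{(0,1)}f$. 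Thus everything reduces to locating the range of $f$.

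Next I would prove that $f$ is strictly increasing on $(0,1)$. Writing $f=N/D$ with $N(t)=(1+t^2)\sinh^{-1}t-t$ and $D(t)=2t^2\sinh^{-1}t$, both vanishing at $t=0$, a differentiation (using $(1+t^2)/\sqrt{1+t^2}=\sqrt{1+t^2}$ and the cancellation $t^2-(1+t^2)=-1$) yields $N'D-ND'=2t\,h(t)$ with
\begin{equation*}
h(t)=t\sinh^{-1}t+\frac{t^2}{\sqrt{1+t^2}}-2\bigl(\sinh^{-1}t\bigr)^2,
\end{equation*}
so $f'$ and $h$ have the same sign on $(0,1)$. The substitution $t=\sinh x$, with $x\in(0,\log(1+\sqrt{2}))$, followed by multiplication by $\cosh x$, turns $h$ into
\begin{equation*}
\Phi(x)=x\sinh x\cosh x+\sinh^2 x-2x^2\cosh x,
\end{equation*}
and the point is that $\Phi$ admits a sign-definite power series: a short calculation gives $\Phi(x)=\sum_{n\ge1}c_n x^{2n}$ with $c_n=\bigl[2(n+1)4^{\,n-1}-4n(2n-1)\bigr]/(2n)!$. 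Here $c_1=c_2=0$ (reflecting that $h$ vanishes to sixth order at the origin), while $c_n>0$ for $n\ge3$ by an elementary induction resting on $(n+1)4^{\,n-1}>2n(2n-1)$. Consequently $\Phi>0$, so $h>0$ and $f'>0$ on the whole interval.

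I expect this monotonicity step to be the main obstacle. The naive approach—expanding $h$ directly, equivalently $x\sinh x+\cosh x-1/\cosh x-2x^2$—fails, because that series has coefficients of both signs (for instance its $x^{8}$ coefficient is negative), so positivity cannot be read off term by term. The decisive device is to clear the denominator by the factor $\cosh x$; after this the combination $x\sinh x\cosh x+\sinh^2 x-2x^2\cosh x$ acquires sign-definite coefficients, which is what makes the argument go through.

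Finally, with monotonicity in hand I would evaluate the endpoints. A Taylor expansion of $f$ at the origin gives $\lim_{t\to0^+}f(t)=5/12$, while $\sinh^{-1}1=\log(1+\sqrt{2})$ gives directly $f(1)=1-1/[2\log(1+\sqrt{2})]$. Monotonicity then shows $\inf_{(0,1)}f=5/12$ and $\sup_{(0,1)}f=1-1/[2\log(1+\sqrt{2})]$, with neither value attained on the open interval. Combining this with the reduction of the first paragraph, the left inequality in (1.3) holds for all admissible $a,b$ exactly when $\alpha_3\ge1-1/[2\log(1+\sqrt{2})]$ and the right one exactly when $\beta_3\le5/12$, which is the assertion. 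Sharpness is immediate from the same limits: any $\alpha_3$ below $1-1/[2\log(1+\sqrt{2})]$ breaks the lower bound for $t$ near $1$, and any $\beta_3$ above $5/12$ breaks the upper bound for $t$ near $0$.
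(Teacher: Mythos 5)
Your argument is correct, and it reaches the same underlying object as the paper: after dividing by $A(a,b)$, both proofs reduce Theorem 1.3 to showing that the quotient $(C-M)/(C-H)$, as a function of the normalized variable, increases strictly from the limit $5/12$ at the origin to $1-1/[2\log(1+\sqrt{2})]$ at the other endpoint, and both sharpness discussions are the same. Where you genuinely diverge is the monotonicity step. The paper substitutes $x=\sinh(t)$ so that the quotient becomes a ratio of two power series whose coefficient ratio is $1/2-1/[(2n+1)2^{2n}]$, visibly increasing in $n$, and then invokes Lemma 2.1 (the monotone rule for ratios of power series); no differentiation is needed. You instead differentiate the quotient directly, reduce $f'>0$ to the positivity of $h(t)=t\sinh^{-1}(t)+t^{2}/\sqrt{1+t^{2}}-2(\sinh^{-1}(t))^{2}$, and prove that by clearing the denominator with $\cosh x$ and expanding $\Phi(x)=x\sinh x\cosh x+\sinh^{2}x-2x^{2}\cosh x$ into a series with coefficients $[2(n+1)4^{n-1}-4n(2n-1)]/(2n)!$, which vanish for $n=1,2$ and are positive for $n\geq 3$. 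I verified the Wronskian computation, the coefficient formula, and the induction $(n+1)4^{n-1}>2n(2n-1)$; they are all right, and your observation that the undivided series for $h$ has coefficients of both signs (so the multiplication by $\cosh x$ is genuinely needed) is also correct. The trade-off is clear: the paper's route is shorter because Lemma 2.1 absorbs the differentiation entirely, while yours is self-contained, avoids that lemma, and makes explicit where the positivity actually lives, at the cost of some extra algebra.
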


\section{Lemmas}

In order to prove our main results we need two Lemmas, which we
present in this section.

\begin{lemma} (See [5, Lemma 1.1]). Suppose that the power series
$f(x)=\sum\limits_{n=0}^{\infty}a_{n}x^{n}$ and
$g(x)=\sum\limits_{n=0}^{\infty}b_{n}x^{n}$ have the radius of
convergence $r>0$ and $b_{n}>0$ for all $n\in\{0,1,2,\cdots\}$. Let
$h(x)={f(x)}/{g(x)}$, then the following statements are true:
\begin{enumerate}[(1)]
\item If the sequence $\{a_{n}/b_{n}\}_{n=0}^{\infty}$ is (strictly)
increasing (decreasing), then $h(x)$ is also (strictly) increasing
(decreasing) on $(0,r)$;
\item If the sequence $\{a_{n}/b_{n}\}$ is (strictly) increasing
(decreasing) for $0<n\leq n_{0}$ and (strictly) decreasing
(increasing) for $n>n_{0}$, then there exists $x_{0}\in(0,r)$ such
that $h(x)$ is (strictly) increasing (decreasing) on $(0,x_{0})$ and
(strictly) decreasing (increasing) on $(x_{0},r)$.
\end{enumerate}
\end{lemma}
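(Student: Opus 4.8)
The plan is to reduce both statements to controlling the sign of the numerator that appears when one differentiates the quotient. Since $b_n>0$ for all $n$, the denominator $g(x)=\sum_{n\ge0}b_nx^n$ is positive on $(0,r)$, so for $x\in(0,r)$ the sign of
$$h'(x)=\frac{f'(x)g(x)-f(x)g'(x)}{g(x)^2}$$
coincides with the sign of $N(x):=f'(x)g(x)-f(x)g'(x)$. First I would expand $N$ as a power series $N(x)=\sum_{n\ge0}c_nx^n$; collecting the Cauchy product (the $k+l=n+1$ terms) gives
$$c_n=\sum_{i+j=n+1}(i-j)\,a_ib_j=\frac12\sum_{i+j=n+1}(i-j)\,(a_ib_j-a_jb_i),$$
the second equality coming from symmetrizing the indices $i\leftrightarrow j$. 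Writing $d_n=a_n/b_n$ and using $a_ib_j-a_jb_i=b_ib_j(d_i-d_j)$ turns this into
$$c_n=\frac12\sum_{i+j=n+1}(i-j)\,(d_i-d_j)\,b_ib_j,$$
which is the key identity linking the coefficients $c_n$ to the monotonicity of the ratio sequence $\{d_n\}=\{a_n/b_n\}$.

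For statement (1) the identity makes the conclusion almost immediate: in each summand the factors $(i-j)$ and $(d_i-d_j)$ have the same sign when $\{d_n\}$ is increasing (and opposite signs when decreasing), while $b_ib_j>0$. Hence every $c_n\ge0$ (resp. $\le0$), so $N(x)\ge0$ (resp. $\le0$) on $(0,r)$ and $h$ is increasing (resp. decreasing). For the strict versions I would note that the pair $i=n+1,\,j=0$ already contributes $b_0b_{n+1}(d_{n+1}-d_0)\neq0$ once $\{d_n\}$ is strictly monotone, so each $c_n$ is strictly of one sign and $N(x)>0$ (resp. $<0$) throughout $(0,r)$.

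Statement (2) is where the real work lies. Here $\{d_n\}$ is unimodal (increasing up to $n_0$, then decreasing), so the summands defining $c_n$ no longer share a common sign --- a pair $(i,j)$ straddling the index $n_0$ can contribute with either sign --- and the conclusion cannot be read off termwise. My plan is to prove the sharper combinatorial fact that the coefficient sequence $\{c_n\}$ itself changes sign exactly once, from $+$ to $-$. Grouping the summands symmetrically about the center $(n+1)/2$ (so that $i=\frac{n+1}{2}+t$, $j=\frac{n+1}{2}-t$ and the weight $(i-j)=2t$ is positive), $c_n$ becomes a positively weighted sum of the symmetric differences $d_{(n+1)/2+t}-d_{(n+1)/2-t}$; as $n$ grows the symmetric window slides past the peak $n_0$, and one expects these differences, hence $c_n$, to be positive while the window is centered at or below $n_0$ and to become negative thereafter. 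Verifying this single sign change rigorously --- controlling how the sign of a weighted average of symmetric increments evolves as the center crosses $n_0$ --- is the main obstacle, and I would expect it to demand the most careful estimation.

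Once the single sign change of $\{c_n\}$ is in hand, the rest is routine. If $c_n\ge0$ for $n\le m$ and $c_n\le0$ for $n>m$, then factoring out $x^m$ gives
$$\frac{N(x)}{x^m}=\sum_{n\le m}c_n\,x^{\,n-m}+\sum_{n>m}c_n\,x^{\,n-m},$$
where for $n<m$ the factor $x^{\,n-m}$ is decreasing with $c_n\ge0$, and for $n>m$ the factor $x^{\,n-m}$ is increasing with $c_n\le0$; thus every term is nonincreasing, so $N(x)/x^m$ is strictly decreasing on $(0,r)$ and changes sign at most once, from $+$ to $-$. Since $c_0=b_0b_1(d_1-d_0)>0$ forces $N(x)>0$ near $0$, there is a unique $x_0\in(0,r)$ with $N>0$ on $(0,x_0)$ and $N<0$ on $(x_0,r)$, giving $h$ increasing on $(0,x_0)$ and decreasing on $(x_0,r)$. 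The remaining case (decreasing then increasing) follows by applying this to $-f$, which reverses all the relevant signs.
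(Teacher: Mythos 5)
The paper offers no proof of this lemma at all: it is quoted verbatim from reference [5] (Simi\'c--Vuorinen), so there is nothing internal to compare against. On its own merits, your treatment of part (1) is correct and complete: the identity $c_n=\tfrac12\sum_{i+j=n+1}(i-j)(d_i-d_j)b_ib_j$ for the coefficients of $N=f'g-fg'$ is right, every summand is nonnegative when $\{d_n\}$ is increasing, and the pair $(i,j)=(n+1,0)$ supplies strict positivity in the strict case. This is the classical Biernacki--Krzy\.z argument and it stands.

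Part (2), however, is where you yourself flag ``the main obstacle,'' and that obstacle is not surmountable as planned: the claim that $\{c_n\}$ changes sign exactly once is not proved (the sliding-window heuristic ignores that a pair $(i,j)$ with $j$ near the peak $n_0$ and $i$ far beyond it contributes negatively while the dominant pair $(n+1,0)$ contributes positively), and in fact it is false under the stated hypotheses. Take $b_n=1$ for all $n$ (so $r=1$) and $a_0=0$, $a_1=1$, $a_2=10$, $a_n=9$ for $n\ge 3$; then $d_n=a_n$ is increasing for $n\le 2$ and decreasing for $n>2$, yet $h(x)=(1-x)f(x)=x+9x^2-x^3$ has $h'(x)=1+18x-3x^2>0$ on all of $(0,1)$, so every $c_n>0$, no sign change occurs, and no $x_0\in(0,1)$ with the asserted behaviour exists (a small perturbation such as $a_n=9+2^{-n}$ for $n\ge3$ makes the tail strictly decreasing without changing the conclusion). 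There is a second, independent gap at the end: even granting a single sign change of $\{c_n\}$, your argument only shows that $N/x^m$ is decreasing, hence that $N$ changes sign \emph{at most} once; positivity near $0$ does not force $N$ to become negative before $r$, so the existence of $x_0\in(0,r)$ is asserted without justification. Both defects reflect a known deficiency of the lemma as stated in [5]; the corrected version (Yang's criterion) adds a hypothesis, namely the sign of $(f'/g')g-f$ at $r^-$, to decide between ``monotone throughout'' and ``increasing--decreasing.'' Fortunately, the present paper only ever invokes part (1) (the ratio sequences in Theorems 1.1 and 1.3 are monotone for all $n$), so none of its results depend on the problematic part (2).
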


\begin{lemma}
Let $p\in(0,1)$, $\lambda_0=1-1/[\sqrt{2}\log(1+\sqrt{2})]=0.1977\cdots$ and
\begin{equation}\label{eqn:2.1}
f_p(x)=\sinh^{-1}(x)-\frac{x}{\sqrt{1+x^2}-p\left(\sqrt{1+x^2}-\sqrt{1-x^2}\right)}.
\end{equation}
Then $f_{1/3}(x)<0$ and $f_{\lambda_0}(x)>0$ for all $x\in(0,1)$.
\end{lemma}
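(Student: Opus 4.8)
The plan is to collapse both inequalities into a single monotonicity statement. Since $p\in(0,1)$ and $x\in(0,1)$, the denominator $\sqrt{1+x^2}-p(\sqrt{1+x^2}-\sqrt{1-x^2})=(1-p)\sqrt{1+x^2}+p\sqrt{1-x^2}$ is positive, so multiplying through gives $\operatorname{sign}f_p(x)=\operatorname{sign}\bigl[\sinh^{-1}(x)\bigl((1-p)\sqrt{1+x^2}+p\sqrt{1-x^2}\bigr)-x\bigr]$. I would write this bracket as $A(x)-pB(x)$, where
\[
A(x)=\sinh^{-1}(x)\sqrt{1+x^2}-x,\qquad B(x)=\sinh^{-1}(x)\bigl(\sqrt{1+x^2}-\sqrt{1-x^2}\bigr).
\]
Because $B(x)>0$ on $(0,1)$, this yields the equivalences $f_p(x)<0\iff h(x)<p$ and $f_p(x)>0\iff h(x)>p$, where $h(x)=A(x)/B(x)$. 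It therefore suffices to pin down the range of $h$ on $(0,1)$.

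Next I would compute the two boundary values. A short Taylor expansion at the origin gives $A(x)=\tfrac13x^3+O(x^5)$ and $B(x)=x^3+O(x^5)$, so $\lim_{x\to0^+}h(x)=\tfrac13$; while at the right end, using $\sinh^{-1}(1)=\log(1+\sqrt2)$ and $\sqrt{1-x^2}\to0$,
\[
\lim_{x\to1^-}h(x)=\frac{\sqrt2\log(1+\sqrt2)-1}{\sqrt2\log(1+\sqrt2)}=1-\frac{1}{\sqrt2\log(1+\sqrt2)}=\lambda_0 .
\]
If $h$ is strictly decreasing on $(0,1)$, then $\lambda_0<h(x)<\tfrac13$ throughout, and the two assertions $f_{1/3}(x)<0$ and $f_{\lambda_0}(x)>0$ drop out at once. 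This also accounts for why $1/3$ and $\lambda_0$ are precisely the sharp constants of Theorem 1.2: they are exactly the two limiting values of $h$.

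The core of the argument is thus the monotonicity of $h$. Both $A$ and $B$ are odd with leading term of order $x^3$, so $A(x)/x^3$ and $B(x)/x^3$ are power series in $y=x^2$, and the intended tool is Lemma 2.1 applied to this pair, reducing the problem to the monotonicity of the associated coefficient quotient. The key simplification I would exploit is that $A$ differentiates cleanly, namely $A'(x)=x\sinh^{-1}(x)/\sqrt{1+x^2}$, so that $\operatorname{sign}h'(x)=\operatorname{sign}\bigl(A'(x)B(x)-A(x)B'(x)\bigr)$ can be organized around the single transcendental factor $\sinh^{-1}(x)$.

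I expect the monotonicity step to be the main obstacle, and the difficulty is structural. Since $\sinh^{-1}$ has alternating Taylor coefficients while $\sqrt{1+x^2}-\sqrt{1-x^2}$ has positive ones, the series of $B$ is not sign-definite (for instance its $x^5$ coefficient equals $-1/6$), so Lemma 2.1 cannot be applied naively to $A/B$. One must instead either pass to the derivative ratio $A'/B'$ and invoke the monotone form of l'Hôpital's rule — which forces an auxiliary monotonicity check on the quotient $(\sqrt{1+x^2}-\sqrt{1-x^2})/(x\sinh^{-1}(x))$ — or rearrange $A'(x)B(x)-A(x)B'(x)$ into a ratio whose denominator coefficients are positive and then verify the sign pattern of the resulting numerator coefficients. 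Carrying out this coefficient bookkeeping, with the two competing square roots $\sqrt{1+x^2}$ and $\sqrt{1-x^2}$ present simultaneously and resisting any single uniformizing substitution, is the genuinely delicate part; the reductions above are designed to isolate it as cleanly as possible.
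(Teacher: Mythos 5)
Your reduction is sound as far as it goes: since the denominator of the fraction in $f_p$ equals $(1-p)\sqrt{1+x^2}+p\sqrt{1-x^2}>0$, one indeed has $f_p(x)<0\iff h(x)<p$ and $f_p(x)>0\iff h(x)>p$ with $h=A/B$ as you define it, and your two limits $h(0^+)=1/3$, $h(1^-)=\lambda_0$ are correct (this $h$ is exactly the quotient the paper writes in the proof of Theorem~1.2). But the entire content of the lemma now rests on showing $\lambda_0<h(x)<1/3$ on $(0,1)$, which you propose to obtain from the strict monotonicity of $h$ --- and that step is not proved. You yourself observe that Lemma~2.1 is inapplicable because the Maclaurin coefficients of $B$ are not all positive (the $x^5$ coefficient is $-1/6$), and the two escape routes you list (the monotone form of l'H\^opital's rule applied to $A'/B'$, or a rearrangement of $A'B-AB'$ followed by ``coefficient bookkeeping'') are named as ``the genuinely delicate part'' without being carried out. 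Identifying the hard step and deferring it is not a proof; as it stands the argument establishes nothing beyond the two boundary limits. Note also that monotonicity of $h$ is strictly stronger than what the lemma needs (only the range bounds are required), and if it were established, Lemma~2.2 would become superfluous: Theorem~1.2 would then follow by the same one-line mechanism as Theorems~1.1 and~1.3.

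The paper avoids this obstruction entirely by never touching the ratio $h$. It fixes the two specific values $p=1/3$ and $p=\lambda_0$ and studies the sign of $f_p$ directly through $f_p'=g_p/\bigl[\sqrt{1-x^4}\,(\cdot)^2\bigr]$: for $p=1/3$ it shows $g_{1/3}<0$ on $(0,1)$ (via the auxiliary function $h_{1/3}$ and the monotonicity of $\sqrt{1+x^2}+\sqrt{1-x^2}$), so $f_{1/3}$ decreases from $f_{1/3}(0)=0$ and is negative; for $p=\lambda_0$ it establishes, through a chain $g_{\lambda_0}\to h_{\lambda_0}\to\mu$ with explicit numerical case distinctions, that $f_{\lambda_0}$ increases then decreases between the two endpoint values $f_{\lambda_0}(0)=f_{\lambda_0}(1)=0$, hence is positive. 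If you want to complete your route, you must actually prove $h$ (or $A'/B'$) decreasing; otherwise you should fall back on a two-case sign analysis of $f_p'$ of the kind the paper performs.
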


\begin{proof}
From (\ref{eqn:2.1}) one has
\begin{align}
f_p(0)&=0,\\
f_p(1)&=\log(1+\sqrt{2})-\frac{1}{\sqrt{2}(1-p)},\\ \label{eqn:2.4}
f'_p(x)&=\frac{g_p(x)}{\sqrt{1-x^4}(\sqrt{1+x^2}+p(\sqrt{1-x^2}-\sqrt{1+x^2}))^2},
\end{align}
where
\begin{align}
\begin{split}
g_p(x)=&\sqrt{1-x^2}\left(\sqrt{1+x^2}+p(\sqrt{1-x^2}-\sqrt{1+x^2})\right)^2-\sqrt{1-x^2}\\
&\hspace{5.6cm}-p(\sqrt{1+x^2}-\sqrt{1-x^2}).
\end{split}
\end{align}

We divide the proof into two cases.
\setlength\leftmargini{.1cm}
\begin{description}
\item[Case 1]$p=1/3$. Then (2.5) leads to
\begin{align}
g_{1/3}(0)&=0,\quad g_{1/3}(1)=-\frac{\sqrt{2}}{3}<0,\\
g'_{1/3}(x)&=\frac{x^3}{\sqrt{1-x^4}}h_{1/3}(x),
\end{align}
where
\begin{equation}
h_{1/3}(x)=\frac{14}{9(\sqrt{1+x^2}+\sqrt{1-x^2})}-(\sqrt{1+x^2}+\sqrt{1-x^2})-\frac{\sqrt{1-x^2}}{3}.
\end{equation}
We clearly see that the function $\sqrt{1+x^2}+\sqrt{1-x^2}$ is strictly decreasing in $(0,1)$. Then from (2.8) we get
\begin{equation}
h_{1/3}(x)<h_{1/3}(1)=-\frac{2\sqrt{2}}{9}<0
\end{equation}
for $x\in(0,1)$.

Therefore, $f_{1/3}(x)<0$ for all $x\in(0,1)$ follows easily from (2.2), (2.4), (2.6), (2.7) and (2.9).

\item[Case 2]$p=\lambda_0$. Then (2.3) and (2.5) yield
\begin{equation}
 f_{\lambda_0}(1)=g_{\lambda_0}(0)=0,\quad g_{\lambda_0}(1)=-\sqrt{2}\lambda_0<0
\end{equation}
and
\begin{equation}
g'_{\lambda_0}(x)=\frac{x}{\sqrt{1-x^4}}h_{\lambda_0}(x),
\end{equation}
where
\begin{align}
\begin{split}
h_{\lambda_0}(x)=&[(2-3\lambda_0-2\lambda_0^2)-(3-6\lambda_0)x^2]\sqrt{1+x^2}\\
&-[(3\lambda_0-2\lambda_0^2)+(6\lambda_0-6\lambda_0^2)x^2]\sqrt{1-x^2}.
\end{split}
\end{align}

We divide the discussion of this case into two subcases.
\setlength\leftmargini{1cm}
\begin{description}
\item[Subcase A]$x\in(0.9,1)$. Then from (2.12) and the fact that
\begin{align*}
\lefteqn{(2-3\lambda_0-2\lambda_0^2)-(3-6\lambda_0)x^2}\\
&<(2-3\lambda_0-2\lambda_0^2)-(3-6\lambda_0)\times(0.9)^2=-0.1404\cdots<0
\end{align*}
we know that
\begin{equation}
h_{\lambda_0}(x)<0
\end{equation}
for $x\in(0.9,1)$.

\item[Subcase B]$x\in(0,0.9]$. Then from (2.12) one has
\begin{equation}
h_{\lambda_0}(0)=0.8137\cdots>0 ,\quad
h_{\lambda_0}(0.9)=-0.7494\cdots<0
\end{equation}
and
\begin{equation}
h'_{\lambda_0}(x)=\frac{x}{\sqrt{1-x^4}}\mu(x),
\end{equation}
where
\begin{align}\label{eqn:2.15}
\begin{split}
\mu(x)=&[(18\lambda_0-18\lambda_0^2)x^2-(9\lambda_0-10\lambda_0^2)]\sqrt{1+x^2}\\
&-[(9-18\lambda_0)x^2+(4-9\lambda_0+2\lambda_0^2)]\sqrt{1-x^2}.
\end{split}
\end{align}

We conclude that
\begin{equation}
\mu(t)<0
\end{equation}
for all $x\in(0,0.9]$. Indeed, if $x\in(0,1/2)$, then (2.17) follows from (2.16) and the inequality
$$(18\lambda_0-18\lambda_0^2)x^2-(9\lambda_0-10\lambda_0^2)<5.5\lambda_0^2-4.5\lambda_0=-0.6747\cdots<0.$$

If $x\in[1/2,0.9]$, then (2.17) follows from (2.16) and the inequalities
\begin{align*}
(18\lambda_0-18\lambda_0^2)x^2-(9\lambda_0-10\lambda_0^2)&\leq(18\lambda_0-18\lambda_0^2)\times(0.9)^2-(9\lambda_0-10\lambda_0^2)\\
&=5.58\lambda_0-4.58\lambda_0^2=0.9242\cdots,\\
(9-18\lambda_0)x^2+(4-9\lambda_0+2\lambda_0^2)&\geq\frac{1}{4}(9-18\lambda_0)+(4-9\lambda_0+2\lambda_0^2)\\
&=6.25-13.5\lambda_0+2\lambda_0^2=3.6589\cdots,\\
[(18\lambda_0-18\lambda_0^2)x^2-(9\lambda_0-10\lambda_0^2)]&\sqrt{1+x^2}\\
-[&(9-18\lambda_0)x^2+(4-9\lambda_0+2\lambda_0^2)]\sqrt{1-x^2}\\
\leq(5.58\lambda_0-4.58\lambda_0^2)\sqrt{1+(0.9)^2}-&(6.25-13.5\lambda_0+2\lambda_0^2)\sqrt{1-(0.9)^2}\\
&=-0.3514\cdots<0.
\end{align*}

From (2.14) and (2.15) together with (2.17) we clearly see that there exists $x_0\in(0,0.9)$ such that $h_{\lambda_0}(x)>0$ for $x\in[0,x_0)$
and $h_{\lambda_0}(x)<0$ for $(x_0,0.9]$.
\end{description}

Subcases A and B lead to the conclusion that $h_{\lambda_0}(x)>0$ for $x\in[0,x_0)$ and $h_{\lambda_0}(x)<0$ for $x\in(x_0,1)$. Thus from (2.11) we know that $g_{\lambda_0}(x)$ is strictly increasing in $(0,x_0]$ and strictly decreasing in $[x_0,1)$.

It follows from (2.4) and (2.10) together with the piecewise monotonicity of $g_{\lambda_0}(x)$ that there exists $x_1\in(0,1)$ such that $f_{\lambda_0}(x)$ is strictly increasing in $[0,x_1)$ and strictly decreasing in $[x_1,1)$.

Therefore, $f_{\lambda_0}(x)>0$ for $x\in(0,1)$ follows from (2.2) and (2.10) together with the piecewise monotonicity of $f_{\lambda_0}(x)$.
\end{description}
\end{proof}

\section{Proof of Theorem 1.1-1.3}

\medskip

\begin{proof}[\bf Proof of Theorem 1.1]
Since $H(a,b)$, $M(a,b)$ and $Q(a,b)$ are symmetric and homogeneous of degree 1. Without loss of generality, we assume that $a>b$. Let
$x=(a-b)/(a+b)$ and $t=\sinh^{-1}(x)$. Then $x\in(0,1)$, $t\in(0,\log(1+\sqrt{2}))$, $M(a,b)/A(a,b)=x/\sinh^{-1}(x)=\sinh(t)/t$,
$H(a,b)/A(a,b)=1-x^2=1-\sinh^2(t)=[3-\cosh(2t)]/2$, $Q(a,b)/A(a,b)=\sqrt{1+x^2}=\cosh(t)$ and
\begin{align}
\begin{split}
\frac{Q(a,b)-M(a,b)}{Q(a,b)-H(a,b)}&=\frac{\sqrt{1+x^2}\sinh^{-1}(x)-x}{[\sqrt{1+x^2}-(1-x^2)]\sinh^{-1}(x)}\\
&=\frac{t\cosh(t)-\sinh(t)}{t[\frac{1}{2}\cosh(2t)+\cosh(t)-\frac{3}{2}]}:=\varphi(t).
\end{split}
\end{align}

Making use of power series $\sinh(t)=\sum_{n=0}^{\infty}t^{2n+1}/(2n+1)!$ and
$\cosh(t)=\sum_{n=0}^{\infty}t^{2n}/(2n)!$ we can express (3.1) as follows
\begin{equation}
\varphi(t)=\frac{\sum_{n=1}^{\infty}[2n/((2n+1)(2n)!)]t^{2n+1}}{\sum_{n=1}^{\infty}[(2^{2n-1}+1)/(2n)!]t^{2n+1}}.
\end{equation}
Let $a_n=2n/((2n+1)(2n)!)$ and $b_n=(2^{2n-1}+1)/(2n)!$. Then
$a_n/b_n=2n/(2n+1)(2^{2n-1}+1)$. Moreover, by a simple calculation,
we see that
\begin{equation}
\frac{a_{n+1}}{b_{n+1}}-\frac{a_n}{b_n}=\frac{2+(2-18n-12n^2)2^{2n-1}}{(2n+1)(2n+3)(2^{2n-1}+1)(2^{2n+1}+1)}<0
\end{equation}
for $n\geq1$.

Equations (3.1) and (3.2) together with inequality (3.3) and Lemma 2.1 lead to the conclusion that $\varphi(t)$ is strictly decreasing in
$(0,\log(1+\sqrt{2}))$. This in turn implies that
\begin{equation}
\lim\limits_{t\rightarrow0^+}\varphi(t)=\frac{2}{9}, \quad \lim\limits_{t\rightarrow \log(1+\sqrt{2})}\varphi(t)=1-\frac{1}{\sqrt{2}\log(1+\sqrt{2})}.
\end{equation}

Therefore, Theorem 1.1 follows from (3.1) and (3.4) together with the monotonicity of $\varphi(t)$.
\end{proof}

\medskip

\begin{proof}[\bf Proof of Theorem 1.2]
Since $G(a,b)$, $M(a,b)$ and $Q(a,b)$ are symmetric and homogeneous of degree 1. Without loss of generality, we assume that $a>b$. Let
$x=(a-b)/(a+b)$, $p\in(0,1)$ and $\lambda_0=1-1/[\sqrt{2}\log(1+\sqrt{2})]$. Then making use of $G(a,b)/A(a,b)=\sqrt{1-x^2}$ gives
\begin{equation}
\frac{Q(a,b)-M(a,b)}{Q(a,b)-G(a,b)}=\frac{\sqrt{1+x^2}\sinh^{-1}(x)-x}{(\sqrt{1+x^2}-\sqrt{1-x^2})\sinh^{-1}(x)}.
\end{equation}
Moreover, we obtain
\begin{align}
\lim\limits_{x\rightarrow0^+}\frac{\sqrt{1+x^2}\sinh^{-1}(x)-x}{(\sqrt{1+x^2}-\sqrt{1-x^2})\sinh^{-1}(x)}&=\frac{1}{3},\\
\lim\limits_{x\rightarrow1^-}\frac{\sqrt{1+x^2}\sinh^{-1}(x)-x}{(\sqrt{1+x^2}-\sqrt{1-x^2})\sinh^{-1}(x)}&=1-\frac{1}{\sqrt{2}\log(1+\sqrt{2})}=\lambda_0.
\end{align}

We take the difference between the additive convex combination of $G(a,b),Q(a,b)$ and $M(a,b)$ as follows
\begin{align}\label{eqn:3.10}
\begin{split}
\lefteqn{pG(a,b)+(1-p)Q(a,b)-M(a,b)}\\
&=A(a,b)\left[p\sqrt{1-x^2}+(1-p)\sqrt{1+x^2}-\frac{x}{\sinh^{-1}(x)}\right]\\
&=\frac{A(a,b)[p\sqrt{1-x^2}+(1-p)\sqrt{1+x^2}]}{\sinh^{-1}(x)}f_p(x),
\end{split}
\end{align}
where $f_p(x)$ is defined as in Lemma 2.2.

\medskip

Therefore, $\frac{1}{3}G(a,b)+\frac{2}{3}Q(a,b)<M(a,b)<\lambda_0G(a,b)+(1-\lambda_0)Q(a,b)$ for all $a,b>0$ with $a\neq b$ follows from (3.8) and
Lemma 2.2. This in conjunction with the following statement gives the asserted result.
\setlength\leftmargini{.3cm}
\begin{itemize}
\item If $p<1/3$, then equations (3.5) and (3.6) imply that there exists $0<\delta_1<1$ such that $M(a,b)<p G(a,b)+(1-p)Q(a,b)$ for all $a,b>0$ with $(a-b)/(a+b)\in(0,\delta_1)$.

\item If $p>\lambda_0$, then equations (3.5) and (3.7) imply that there exists $0<\delta_2<1$ such that $M(a,b)>p G(a,b)+(1-p)Q(a,b)$ for all $a,b>0$ with $(a-b)/(a+b)\in(1-\delta_2,1)$.
\end{itemize}
\end{proof}

\begin{proof}[\bf Proof of Theorem 1.3]
We will follow, to some extent, lines in the proof of Theorem 3.1.
First we rearrange terms of (1.3) to obtain
\begin{equation*}
\beta_3<\frac{C(a,b)-M(a,b)}{C(a,b)-H(a,b)}<\alpha_3.
\end{equation*}
Use of $C(a,b)/A(a,b)=1+x^2$ followed by a substitution $x=\sinh(t)$
gives
\begin{equation}
\beta_3<\phi(t)<\alpha_3
\end{equation}
where
\begin{equation}
\phi(t)=\frac{t[\cosh(2t)+1]-2\sinh(t)}{2t[\cosh(2t)-1]},\quad
|t|<\log(1+\sqrt{2}).
\end{equation}

Since the function $\phi(t)$ is an even function, it suffices to investigate its
behavior on the interval $(0, \log(1+\sqrt{2}))$.

\medskip

Using power series of $\sinh(t)$ and $\cosh(t)$, then (3.10) can be rewritten as
\begin{equation}
\phi(t)=\frac{\sum_{n=1}^{\infty}[2^{2n}/(2n)!-2/(2n+1)!]t^{2n+1}}{\sum_{n=1}^{\infty}[2^{2n+1}/(2n)!]t^{2n+1}}.
\end{equation}

Let $c_n=2^{2n}/(2n)!-2/(2n+1)!$ and $d_n=2^{2n+1}/(2n)!$. Then
\begin{equation}
\frac{c_n}{d_n}=\frac{1}{2}-\frac{1}{(2n+1)2^{2n}}.
\end{equation}
It follows from (3.12) that the sequence $\{c_n/d_n\}$ is strictly increasing for $n\geq1$.

\medskip

Equations (3.11) and (3.12) together with Lemma 2.1 and the
monotonicity of $\{c_n/d_n\}$ lead to the conclusion that $\phi(t)$
is strictly increasing in $(0,\log(1+\sqrt{2}))$. Moreover,
\begin{equation}
\lim\limits_{t\rightarrow0^+}\phi(t)=\frac{c_1}{d_1}=\frac{5}{12},\quad
\lim\limits_{t\rightarrow\log(1+\sqrt{2})}\phi(t)=1-\frac{1}{2\log(1+\sqrt{2})}.
\end{equation}

Making use of (3.13) and (3.9) together with the monotonicity of
$\phi(t)$ gives the asserted result.

\end{proof}

\bigskip
{\small \hfill {\sc Tiehong Zhao}

\hfill {\sl Department of Mathematics}

\hfill {\sl Hangzhou Normal University}

\hfill {\sl Hangzhou 310036}

\hfill {\sl China}

\hfill {\sl Email:} \texttt{tiehongzhao@gmail.com}

\bigskip
{\small \hfill {\sc Yuming Chu}

\hfill {\sl Department of Mathematics}

\hfill {\sl Huzhou Teachers College}

\hfill {\sl Huzhou 313000}

\hfill {\sl China}

\hfill {\sl Email:} \texttt{chuyuming2005@yahoo.com.cn}

\bigskip
{\small \hfill {\sc Baoyu Liu}

\hfill {\sl School of Science}

\hfill {\sl Hangzhou Dianzi University}

\hfill {\sl Hangzhou 310018}

\hfill {\sl China}

\hfill {\sl Email:} \texttt{627847649@qq.com}
\end{document}